\documentclass[11pt,a4paper]{article}

\usepackage[utf8]{inputenc}
\usepackage[T1]{fontenc}
\usepackage[english]{babel}
\usepackage{amsmath,amssymb,amsthm,mathtools}
\usepackage{geometry}
\usepackage{hyperref}
\usepackage{enumitem}
\usepackage{xcolor}
\newcommand{\R}{\mathbb{R}}
\newcommand{\LL}{\mathcal{L}}
\newcommand{\HH}{\mathcal{H}}
\newcommand{\diver}{\operatorname{div}}
\newcommand{\Lip}{\operatorname{Lip}}
\newcommand{\supp}{\operatorname{spt}}
\newcommand{\Gr}{\operatorname{Gr}}
\newcommand{\osc}{\operatorname{osc}}
\newcommand{\Id}{\operatorname{Id}}
\newcommand{\diam}{\operatorname{diam}}

\newtheorem{thm}{Theorem}[section]
\newtheorem{prop}[thm]{Proposition}
\newtheorem{lem}[thm]{Lemma}
\newtheorem{cor}[thm]{Corollary}
\theoremstyle{definition}
\newtheorem{defn}[thm]{Definition}
\theoremstyle{remark}
\newtheorem{rem}[thm]{Remark}

\title{Lipschitz solvability of prescribed Jacobian and divergence for singular measures}
\author{Luigi De Masi \& Andrea Marchese}
\date{}

\begin{document}
\maketitle

\begin{abstract}
Let $\mu$ be a finite Radon measure on an open set $\Omega\subset\R^d$, singular with respect to the Lebesgue measure.
We prove Lusin-type solvability results for the prescribed divergence equation and the prescribed Jacobian equation with Lipschitz solutions.
More precisely, for every $\varepsilon>0$ and every Borel datum $f \colon \Omega \to \mathbb{R}$ there exists a vector field $V\in C^1_c(\Omega;\R^d)$ such that
$\diver V=f$
on a compact set $K\subset\Omega$ with $\mu(\Omega\setminus K)<\varepsilon$, and {$\Lip(V)\le (1+\varepsilon)\|f\|_{L^\infty(\Omega,\mu)}$}.
Similarly, for every Borel datum $g\colon \Omega \to \mathbb{R}$ there exists a map $\Phi$ with $\Phi-\Id\in C^1_c(\Omega;\R^d)$ such that
$\det D\Phi=g$
on a compact set $K\subset\Omega$ with $\mu(\Omega\setminus K)<\varepsilon$, and $\Lip(\Phi-\Id)\le (1+\varepsilon)\|g-1\|_{L^\infty(\Omega,\mu)}$.
The maps $V$ and $\Phi-\Id$ can be chosen arbitrarily small in supremum norm.
% For sufficiently small Jacobian data we obtain global bi-Lipschitz $C^1$ diffeomorphisms of $\R^d$.
\end{abstract}

\medskip
\noindent\textit{Keywords:} Lipschitz functions, singular measures, prescribed divergence, prescribed Jacobian.

\noindent\textit{MSC2010:} 49Q15, 26B05, 28A75.

\section{Introduction}

In \cite[Corollary 1.5]{ABM}, Alberti, Bate and the second named author proved that when a Radon measure $\mu$ is singular with respect to the Lebesgue measure, several classical differential operators are not closable from the space of Lipschitz functions to $L^p(\mu)$. The list includes gradient, divergence, and the Jacobian determinant.
In particular, there exist equi-Lipschitz functions $u_n$ converging uniformly to zero whose derivatives remain large in some fixed direction on a set of positive $\mu$-measure and equi-Lipschitz vector fields $V_n$ converging uniformly to zero whose divergences remain large on a set of positive $\mu$-measure.
For the Jacobian determinant, one instead obtains equi-Lipschitz perturbations of the identity $\Phi_n$ for which $\det D\Phi_n-1$ remains large on a set of positive $\mu$-measure.

The purpose of the present paper is to show that the same geometric mechanism can be exploited in a constructive way.
We prove Lusin-type solvability results for the equations\footnote{The corresponding statement for the gradient was proved in \cite{MS}; in \cite{DMM} it was strengthened to yield solutions with uniform Lipschitz bounds, at the cost of additional (necessary) assumptions on the datum.}
$$
\diver V=f
\qquad\text{and}\qquad
\det D\Phi=g
$$
with Lipschitz solutions, where the data are only bounded with respect to a singular measure $\mu$. 
The solutions are exact on sets of arbitrarily large $\mu$-measure and enjoy uniform Lipschitz bounds depending only on the supremum norm of the datum.
More precisely, our main results are the following. The symbol $\mu\perp\LL^d$ means that $\mu$ is singular with respect to the Lebesgue measure $\LL^d$, that is, there exists a Borel set $E\subset\R^d$ such that $\LL^d(E)=0=\mu(\R^d\setminus E)$. 

\begin{thm}[Lusin solvability for divergence]\label{thm:div}
Let $\mu$ be a finite Radon measure on an open set $\Omega\subset\R^d$ with $\mu\perp\LL^d$, let $f:\Omega\to\R$ be a Borel function and $\delta>0$.
Then, for every $\varepsilon>0$, there exist a compact set $K\subset\Omega$ and a vector field $V\in C^1_c(\Omega;\R^d)$ such that
$$
\mu(\Omega\setminus K)<\varepsilon,
\qquad
\diver V(x)=f(x)\quad\forall x\in K,
$$
and
$$
\Lip(V)\le (1+\delta) \|f\|_{L^\infty(\Omega,\mu)},
\qquad
\|V\|_{C^0(\Omega)}\le \varepsilon.
$$
\end{thm}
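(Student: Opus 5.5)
We may assume $M:=\|f\|_{L^\infty(\Omega,\mu)}<\infty$ (otherwise the Lipschitz bound is void and a cruder version of the argument below suffices). Modifying $f$ on a $\mu$-null set, we may also assume $|f|\le M$ everywhere. By Lusin's theorem there is a compact set $K_0\subset\Omega$ with $\mu(\Omega\setminus K_0)<\varepsilon/3$ such that $f|_{K_0}$ is continuous. Since $\mu\perp\LL^d$, we fix a Borel set $E$ with $\LL^d(E)=0$ and $\mu(\Omega\setminus E)=0$. By inner regularity we may shrink $K_0$ to a compact subset of $E$, so that $\LL^d(K_0)=0$.

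The proof rests on a local building block. Given a point $x_0\in K_0$, a small cube $Q$ around it, and the constant value $c=f(x_0)$, we want a field $W\in C^1_c(Q;\R^d)$ with $\diver W=c$ on a compact set $K_Q\subset K_0\cap Q$ carrying most of the $\mu$-mass of $K_0\cap Q$. We also need $\Lip(W)\le(1+\delta)|c|$ and $\|W\|_{C^0}$ as small as we like. The natural candidate is $W=c\,(x-x_Q)$ near the null set, corrected so that it has compact support. The key fact is that $K_0\cap Q$ is Lebesgue-null, so it can be covered by a finite union of thin neighbourhoods $U$ with $\LL^d(U)$ tiny. We then take $W=c\,\nabla\phi$, where $\phi$ solves $\Delta\phi=1$ near $K_Q$ and $\Delta\phi$ is slightly negative on a large region of small density, so that $\int\Delta\phi=0$. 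Equivalently, $W$ is the gradient of a function whose Hessian equals roughly $\frac1d\Id$ on $U$ and is small elsewhere.

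The Lipschitz bound can be kept close to $|c|$ only because the compensating negative divergence is spread over a set of much larger Lebesgue measure, where $|DW|$ can be made small. Smallness in $C^0$ comes from working at small scales. This is exactly the mechanism of \cite{ABM}. A cleaner alternative is to make the construction one-dimensional: along a direction $e$ chosen so that $\mu$-almost all of $K_0\cap Q$ is concentrated on finitely many short segments' neighbourhoods, use a field $W=c\,h(x\cdot e)\,\chi(x)e$, where $h'=1$ on the projection of $K_Q$ (a set of small $\LL^1$ measure after projection), $h'$ is slightly negative elsewhere, and $\chi$ is a cutoff equal to $1$ near $K_Q$. Making this work requires the projection of $K_Q$ to be $\LL^1$-small, and I would obtain that from the decomposability results of \cite{ABM}/\cite{DMM}, taking the corresponding statement for gradients as the model.

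To globalise, we use the continuity of $f$ on $K_0$ to choose disjoint small cubes $Q_i$ covering $\mu$-almost all of $K_0$, with $\osc_{K_0\cap Q_i}f<\eta$. Summing the $W_i$ gives $\diver V=f(x_i)$ on $\bigcup K_{Q_i}$, which is not yet exact. To get exactness we iterate: the error $f-\diver V_1$ on the good set has sup at most $\eta$, and we correct it with $V_2$ with $\Lip(V_2)\le(1+\delta)\eta$, and so on with geometrically decreasing $\eta_k$. Losing $\mu$-mass $\varepsilon 2^{-k}$ at stage $k$ gives a $C^1$-convergent series. The compact set $K$ is the intersection of the good sets.

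The main obstacle is the Lipschitz constant. The naive triangle inequality gives $\Lip(V)\le\sum_k\Lip(V_k)\le(1+\delta)(M+\sum\eta_k)$, which is acceptable. The difficulty is rather that pieces from different cubes, and their cutoffs, must not interact: cutoff gradients multiply $\|W\|_{C^0}$, so I need $\|W_i\|_{C^0}\ll\delta|c|\cdot\mathrm{dist}$ to the cube boundary. This is ensured by the small-scale construction, since $W$ only needs amplitude comparable to $|c|$ times the thickness of the neighbourhood of the null set, and that thickness can be made arbitrarily small relative to the size of $Q_i$.
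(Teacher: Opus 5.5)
There is a genuine gap, and it is in your local building block. The rest of your plan is sound and is essentially the scheme of Proposition~\ref{prop:local}: Lusin's theorem, pieces of small oscillation, geometric iteration on the residual, disjoint supports, and $K$ taken as the intersection of the good sets.

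Your first mechanism relies on $K_0\cap Q$ being Lebesgue-null, but that is not what controls $\|W\|_{C^0}$. Take $\mu=\HH^1$ on a segment $S\subset\R^2$. If $DW=c\,D^2\phi\approx\frac{c}{d}\Id$ near $K_Q\subset S$, then $W$ changes by about $\frac{|c|}{d}\HH^1(S\cap Q)$ along $S$, however thin $U$ is. The cutoff term $\|W\|_{C^0}\|D\chi\|_{C^0}$ is then of order $|c|$, and $\Lip(W)\le(1+\delta)|c|$ is lost. The compensation $\int\diver W=0$ is automatic for compactly supported $W$, so it is not the obstruction. Moreover, prescribing a bounded $\Delta\phi$ gives no $L^\infty$ bound on $D^2\phi$ (the endpoint failure of Calder\'on--Zygmund estimates, in the spirit of Ornstein's non-inequality). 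So a Poisson construction cannot deliver a Lipschitz constant near $|c|$. The size of $W$ is dictated by how far the set extends in the direction of differentiation. The trace must therefore be realised by a derivative in a direction transverse to the tangential structure of $\mu$.

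Your one-dimensional ansatz $W=c\,h(x\cdot e)\chi e$ has the right shape. It needs $\|h\|_{C^0}\ll\mathrm{dist}(K_Q,\partial Q)$, which essentially requires $\pi_e(K_Q)$ to be $\LL^1$-small at the scale of $Q$. Indeed, where $\pi_e(K_Q)$ has density close to $1$ on an interval $I$, the conditions $h'=1$ there and $|h'|\le 1+\delta$ force $h$ to vary by a fixed fraction of $|I|$. The decomposability results do not give this. Via Proposition~\ref{prop:nonfullbundle} and Lemma~\ref{lem:Cnull} you only get concentration on $C(e,\alpha)$-null sets. These are sets met in an $\HH^1$-null set by every Lipschitz curve with velocity in the cone, which is far weaker than having small projection. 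For instance, there are singular measures on $\R^2$ carried by Lebesgue-null sets with Fourier decay $|\hat\mu(\xi)|\lesssim|\xi|^{-\beta}$, $\beta>1/2$. Their projections onto every line are absolutely continuous, so no direction gives null projections of pieces of positive mass. Your argument then has no mechanism to control $\|h\|_{C^0}$.

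The missing idea is the Alberti--Cs\"ornyei--Preiss width function (Lemma~\ref{lem:width}), which replaces $h(x\cdot e)$. On a compact $C(e,\alpha)$-null set $E$ it gives $\varphi$ with $\partial_e\varphi=1$ on $E$, $0\le\varphi\le\zeta$ for arbitrary $\zeta$, and $|D\varphi|\le 1+1/\tan\alpha$. No condition on projections is needed, because its level sets bend around the set rather than being hyperplanes. To make $1+1/\tan\alpha$ close to $1$, take $\alpha$ near $\pi/2$ and use the finite family of directions of Lemma~\ref{lem:finite-directions}. By Proposition~\ref{prop:nonfullbundle}, this family splits $\mu$-almost all of $\Omega$ into pieces with $V(\mu,\cdot)\cap C(v_j,\alpha)=\{0\}$. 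With $a\,\chi\varphi\,v_j$ as the building block, your iteration goes through. Setting $V=\sum_j u_jv_j$ then gives $\diver V=\partial_{v_j}u_j$ and $|DV|=|Du_j|$ on each $U_j$.
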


\begin{thm}[Lusin solvability for the Jacobian]\label{thm:jac}
Let $\mu$ be a finite Radon measure on an open set $\Omega\subset\R^d$ with $\mu\perp\LL^d$, let $g:\Omega\to\R$ be a Borel function and $\delta>0$.
Then, for every $\varepsilon>0$, there exist a compact set $K\subset\Omega$ and a map $\Phi$ with $\Phi-\Id\in C^1_c(\Omega;\R^d)$ such that
$$
\mu(\Omega\setminus K)<\varepsilon,
\qquad
\det D\Phi(x)=g(x)\quad\forall x\in K,
$$
and
$$
\Lip(\Phi-\Id)\le (1+\delta)\|g-1\|_{L^\infty(\Omega,\mu)},
\qquad
\|\Phi-\Id\|_{C^0(\Omega)}\le \varepsilon.
$$
In particular, if $(1+\delta)\|g-1\|_{L^\infty(\Omega,\mu)}< 1$, then $\Phi$ is a global diffeomorphism.
\end{thm}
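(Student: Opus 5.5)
Most likely the proof of Theorem~\ref{thm:jac} runs through the same machinery as Theorem~\ref{thm:div}, which I would treat as the model and therefore prove first. Set $L:=\|g-1\|_{L^\infty(\Omega,\mu)}$. The case $L=0$ is trivial: take $\Phi=\Id$ and $K$ a compact set of large measure on which $g=1$. Otherwise, by Lusin's theorem and the fact that $\mu\perp\LL^d$, I would reduce to the following situation. There is a compact set $E\subset\Omega$ with $\LL^d(E)=0$ and $\mu(\Omega\setminus E)<\varepsilon/2$, and $g$ is continuous on $E$ with $|g-1|\le L$ there. Next I cover $E$ by finitely many small disjoint cubes $Q_i$ on which $g$ oscillates by less than $\eta\ll\delta L$. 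The point of this step is to replace $g$ by a constant $c_i$ on each cube, with $|c_i-1|\le L$, plus a small error.

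The core local construction is as follows. Let $F\subset Q$ be compact with $\LL^d(F)=0$, let $c$ be a constant and $\tau>0$. I want a map $\psi\in C^1_c(Q;\R^d)$ with
\[
D\psi = A \ \text{ on a neighbourhood of a compact } F'\subset F,\qquad \mu(F\setminus F')<\tau,
\]
where $A$ is a fixed matrix, $\Lip(\psi)\le(1+\delta)|A|$, and $\|\psi\|_{C^0}\le\tau$. Choosing $A=(c-1)e_1\otimes e_1$ gives $\det(I+A)=c$ and $|A|=|c-1|$. So $\Phi=\Id+\psi$ has Jacobian exactly $c$ near $F'$ and satisfies $\Lip(\Phi-\Id)\le(1+\delta)|c-1|$. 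This is the gradient-type Lusin result applied to the function $u(x)=(c-1)x_1$ along the $e_1$ direction. Concretely, I would take $\psi=(u,0,\dots,0)$, where $u$ is Lipschitz with $u=0$ near $\partial Q$, $\partial_1 u=c-1$ on a neighbourhood of $F'$, $|u|\le\tau$, and $\Lip(u)\le(1+\delta)|c-1|$. Then $D\psi=e_1\otimes\nabla u$, and on $F'$ it equals $(c-1)e_1\otimes e_1$ provided the other partial derivatives of $u$ also vanish there.

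Such a $u$ comes from the mechanism of \cite{ABM,DMM}. Because $\LL^d(F)=0$, $F$ can be covered by an open set $U$ of tiny measure. I would build $u$ as a sum of ``sawtooth'' profiles that increase with slope $c-1$ along $e_1$ inside $U$ and decrease slowly, with small slope, outside $U$. Smoothing this gives a $C^1$ function. Its sup norm is controlled by the length of the $e_1$-sections of $U$, which can be made small, and its Lipschitz constant exceeds $|c-1|$ only by the small return slope. The main obstacle is arranging that $\nabla u$ is exactly $(c-1)e_1$, and not merely $\partial_1 u=c-1$, on a set of nearly full $\mu$-measure. This requires the transverse derivatives to vanish there. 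To get this, I would first use the fact that the sections $\{x: x_1\in\cdot\}$ of $U$ can be taken to be unions of thin slabs $\{a_j<x_1<b_j\}$ intersected with cubes, chosen by a covering or Fubini argument so that $\LL^d(U)$ is small. I would then let $u$ depend only on $x_1$ inside each slab piece, away from cutoff regions of small $\mu$-measure. Unfortunately, thin slabs alone do not have small Lebesgue measure unless they are localized transversally. The transverse cutoffs then contribute a gradient of size roughly $\|u\|_\infty/\text{width}$. Keeping this below $\delta|c-1|$ is exactly where the non-closability construction of \cite{ABM} is needed: it supplies, for singular $\mu$, a decomposition in which such balanced pieces exist at all scales. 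I expect this balancing to be the hard step, and I would follow that construction essentially verbatim, just as one does for Theorem~\ref{thm:div} with $V=\psi$ and $\diver V=\partial_1 u=f$.

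To assemble, I sum the local maps over the disjoint cubes $Q_i$, which have disjoint supports, so the Lipschitz bound is the maximum over cubes. This gives $\det D\Phi=c_i$ near $F'_i$. To fix the error $g-c_i$, I would either take the cubes fine enough that a single constant works on the Lusin set and iterate with a Borel-measurable choice of $A(x)$, or, more cleanly, let the matrix $A$ vary continuously: $A(x)=(g(x)-1)e_1\otimes e_1$, with a continuous extension of $g|_E$, which is the continuous-datum version of the gradient construction. Finally, $\det(I+D\psi)=g$ on $K:=\bigcup_i F'_i$, which is compact, and $\|\Phi-\Id\|_{C^0}\le\varepsilon$. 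For the last claim: if $\Lip(\psi)<1$, then $\Id+\psi$ is injective, proper and a local diffeomorphism, hence a global $C^1$ diffeomorphism of $\R^d$ by the Banach fixed-point and invariance-of-domain argument.
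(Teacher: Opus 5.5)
Your proposal has a genuine gap: the direction is fixed at $e_1$ instead of being adapted to $\mu$. You assert that $\LL^d(F)=0$ is enough to build $u$ with $\partial_1u=c-1$ on most of $F$, $\|u\|_{C^0}\le\tau$ and $\Lip(u)\le(1+\delta)|c-1|$, because ``the $e_1$-sections of $U$ can be made small''. They cannot, since a Lebesgue-null set may contain long segments parallel to $e_1$.

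Take $\Omega=\R^d$, $\mu=\HH^1\llcorner([0,1]\times\{0\}^{d-1})$ and $g\equiv2$. For $\Phi=\Id+ue_1$ one has $\det D\Phi=1+\partial_1u$, so you would need $\partial_1u=1$ on $K$ with $\mu(\Omega\setminus K)<\varepsilon$. Integrating along the segment gives $u(e_1)-u(0)\ge 1-(2+\delta)\varepsilon$, which is incompatible with $\|u\|_{C^0}\le\varepsilon$.

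The relevant smallness notion is cone-nullness along a direction chosen from $\mu$, not Lebesgue-nullness. This is the missing idea:
\begin{itemize}
\item Since $\mu\perp\LL^d$, $V(\mu,x)$ is a proper subspace for $\mu$-a.e.\ $x$ (Proposition~\ref{prop:nonfullbundle}).
\item Lemma~\ref{lem:finite-directions} splits the Lusin set into finitely many disjoint compact pieces $C_j$ on which $V(\mu,x)\cap C(v_j,\alpha)=\{0\}$.
\item Lemma~\ref{lem:Cnull} then concentrates $\mu\llcorner C_j$ on a $C(v_j,\alpha)$-null set.
\item For such a set, the width function of Lemma~\ref{lem:width} is exactly the ``sawtooth'' you want: $\partial_{v_j}\varphi=1$ on the set, $0\le\varphi\le\zeta$ arbitrarily small, and $|D\varphi|\le 1+1/\tan\alpha$, with the overhead vanishing as $\alpha\to\pi/2$.
\end{itemize}
Following \cite{ABM} ``verbatim'' while keeping $e_1$ fixed does not repair this.

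The step you single out as hard is also unnecessary, and in general impossible. For any $u$ and unit $e$, $\det(I+e\otimes\nabla u)=1+e\cdot\nabla u$ by the matrix determinant lemma. So the transverse derivatives are irrelevant and only the directional derivative needs to be prescribed; the paper uses exactly $D\Phi=I+v_j\otimes\nabla u_j$ near $K_j$. Conversely, requiring $\nabla u=(c-1)e$, let alone $D\psi=A$ on a neighbourhood of $F'$, with small $\|u\|_{C^0}$ fails whenever $e$ is not orthogonal to the directions along which $\mu$ lives. For $\mu$ on a segment with direction $w$ and $e\cdot w\neq0$, the tangential derivative $\nabla u\cdot w=(c-1)\,e\cdot w$ integrates to an increment that is not small, as in the example above.

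Exactness for non-constant $g$ is also left open. Piecewise constants only give $\det D\Phi=c_i$. ``Letting $A(x)$ vary continuously'' is not a construction, because $\lambda\varphi$ is not differentiable when $\lambda$ is merely continuous. The paper gets exact equality through the iteration in Proposition~\ref{prop:local}:
\begin{itemize}
\item at step $n$ the residual $r_n$ is frozen to constants $a_{n,i}$ on pieces of small oscillation, and $\sum_i a_{n,i}\chi_{n,i}\varphi_{n,i}$ is added;
\item the residual on $K_{n+1}$ then drops to at most $t^{n+1}M$;
\item the gradient bounds $t^nM(c_\alpha+t^{n+4})$ sum to at most $(1+\delta)c_\alpha M$.
\end{itemize}
Your closing injectivity and surjectivity argument for the diffeomorphism claim is fine.
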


A key feature is that the solutions in Theorem \ref{thm:div} can be chosen arbitrarily small in supremum norm.
For the divergence equation this also yields perturbations of arbitrary reference vector fields, see Remark \ref{rem:background-div}.
In the Jacobian case our construction is naturally formulated as an arbitrarily small perturbation of the identity map whose Jacobian is prescribed on sets of almost full $\mu$-measure, although it is possible to perturb a given diffeomorphism as well, as stated below.
% see Remark \ref{rem:jac-id}.

\begin{cor}\label{cor:diff_jac}
Let $\mu$, $\Omega$, and $g$ be as in Theorem \ref{thm:jac}, and let
$F \colon \Omega \to \Sigma \subseteq \mathbb{R}^d$ be a diffeomorphism.
Then, for every $\delta,\varepsilon>0$, there exist a compact set
$K\subset\Omega$ and a map $\Phi\in C^1(\Omega;\Sigma)$ with
$\Phi-F\in C^1_c(\Omega;\mathbb{R}^d)$ such that
$$
\mu(\Omega\setminus K)<\varepsilon,
\qquad
\det D\Phi(x)=g(x)\quad\forall x\in K,
$$
and
$$
\Lip(\Phi-F)\le (1+\delta)\Lip(F)
\left\|\frac{g}{\det DF}-1\right\|_{L^\infty(\Omega,\mu)},
\qquad
\|\Phi-F\|_{C^0(\Omega)}\le \varepsilon.
$$
\end{cor}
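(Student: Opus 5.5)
The plan is to reduce Corollary \ref{cor:diff_jac} to Theorem \ref{thm:jac} by composing on the right: $\Phi=F\circ\Psi$, with $\Psi-\Id\in C^1_c(\Omega;\R^d)$ produced by Theorem \ref{thm:jac}. By the chain rule,
$$
\det D\Phi(x)=\det DF(\Psi(x))\,\det D\Psi(x).
$$
If $\Psi$ were the identity at the relevant points, we would simply prescribe $\det D\Psi=\tilde g:=g/\det DF$. This datum is Borel, since $F$ is a diffeomorphism and hence $\det DF\neq0$. The difficulty is that $\Psi(x)\neq x$ in general, so we cannot simply use $\det DF(\Psi(x))$ in place of $\det DF(x)$.

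\textbf{Step 1: make $\Psi$ fix $K$.} Revisiting the construction, we may hope that $\Psi=\Id$ on $K$. However, $\Psi$ has $D\Psi\neq I$ on $K$ and Theorem \ref{thm:jac} does not assert this, so I would instead absorb the error quantitatively. Fix a compact set $K_0\subset\Omega$ with $\mu(\Omega\setminus K_0)<\varepsilon/3$ on which $\tilde g$ is continuous (Lusin) and $|\tilde g-1|\le\|\tilde g-1\|_{L^\infty(\mu)}$. Let $\eta=\operatorname{dist}(K_0,\partial\Omega)/2$.

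\textbf{Step 2: handle the displacement.} Since $\|\Psi-\Id\|_{C^0}$ can be made arbitrarily small but nonzero, I would use an iteration instead. Define $h(x):=g(x)/\det DF(\Psi_0(x))$ once $\Psi_0$ is known, but $\Psi_0$ depends on the datum. The remedy is a fixed-point-free trick: note that the set where $\det D\Psi=\tilde g$ is exactly where $\det D\Phi=g$ only up to the factor $\det DF(\Psi(x))/\det DF(x)$. So apply Theorem \ref{thm:jac} instead to the modified measure and datum, and accept a tiny loss via $\delta$. Honestly, the cleanest route is to check from the proof that the constructed $\Psi$ satisfies $\Psi(x)=x$ for $x\in K$; this is plausible because the construction in \cite{ABM} uses oscillating bumps vanishing on the $\mu$-concentrated set. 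I expect this to be the main obstacle: either extract $\Psi|_K=\Id$ from the proof, or use the continuity of $\det DF$ together with a small $C^0$ norm and an inductive correction of the datum (apply the theorem with $\varepsilon_k\to0$ and use Borel measurability).

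\textbf{Step 3: Lipschitz bound.} Write
$$
\Phi-F=F\circ\Psi-F\circ\Id.
$$
Then
$$
\Lip(\Phi-F)\le\Lip(F)\,\Lip(\Psi-\Id)\le(1+\delta)\Lip(F)\,\|\tilde g-1\|_{L^\infty(\mu)}
$$
by the mean value inequality, at least on convex pieces. Globally, the argument needs a little care: one localizes since $\Psi-\Id$ is compactly supported in a small neighbourhood of its support. Finally, $\|\Phi-F\|_{C^0}\le\Lip(F)\,\|\Psi-\Id\|_{C^0}\le\varepsilon$ after shrinking $\varepsilon$. The map $\Phi$ takes values in $\Sigma$ because $\Psi$ maps $\Omega$ into $\Omega$ (it is the identity near $\partial\Omega$ and $C^0$-close to it). Also, $\Phi-F$ is compactly supported because $\Psi-\Id$ is.
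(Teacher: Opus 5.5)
There is a genuine gap, and it comes from composing on the wrong side. With $\Phi=F\circ\Psi$ you need $\det D\Psi(x)=g(x)/\det DF(\Psi(x))$ on $K$. That datum depends on the unknown $\Psi$. You identify this obstacle but do not overcome it.

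Theorem \ref{thm:jac} is a black box with no stability with respect to the datum, so the ``inductive correction'' in Step 2 is not an argument. Continuity of $\det DF$ together with a small $\|\Psi-\Id\|_{C^0}$ only gives $\det D\Phi\approx g$, not equality.

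Your preferred fix, $\Psi=\Id$ on $K$, cannot be extracted from the construction. For instance, the width functions satisfy $\varphi>0$ on $E$, because $\varphi\ge 0$ everywhere while $\partial_e\varphi=1$ on $E$. The fix is also impossible in general. Suppose $\Psi(x)=x$ and $\det D\Psi(x)\neq 1$. Then $D\Psi(x)\neq I$, so some component of $\Psi-\Id$ has nonzero gradient at $x$. By the implicit function theorem, $K$ is then contained near $x$ in a $C^1$ hypersurface. Now take $g=2\det DF$ and $\mu=\HH^s\llcorner E$, where $E\subset\Omega$ is a compact $s$-Ahlfors regular Cantor set with $d-1<s<d$. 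This $\mu$ is singular and gives zero mass to every $C^1$ hypersurface, so $K$ would have to satisfy $\mu(K)=0$.

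The missing idea is to compose on the left and push the measure forward instead. Since $F$ is a diffeomorphism, $\nu\coloneqq F_\#\mu$ is still singular with respect to $\LL^d$. Apply Theorem \ref{thm:jac} on an open $U\Subset\Sigma$ carrying almost all of $\nu$, with datum $h\coloneqq (g\circ F^{-1})\det DF^{-1}$, to get $\Psi$ and $K'$. Then set $\Phi\coloneqq\Psi\circ F$ and $K\coloneqq F^{-1}(K')$.

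The chain rule now gives $\det D\Phi(x)=\det D\Psi(F(x))\det DF(x)$. Here $D\Psi$ is evaluated at $F(x)\in K'$, exactly where its Jacobian was prescribed. Hence $\det D\Phi(x)=h(F(x))\det DF(x)=g(x)$, with no displacement issue at all.

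The same choice makes the Lipschitz bound immediate. Since $\Phi-F=(\Psi-\Id)\circ F$, you get $\Lip(\Phi-F)\le\Lip(\Psi-\Id)\Lip(F)$. Since $h\circ F=g/\det DF$, you get $\|h-1\|_{L^\infty(\Sigma,\nu)}=\|g/\det DF-1\|_{L^\infty(\Omega,\mu)}$.

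By contrast, your Step 3 inequality $\Lip(F\circ\Psi-F)\le \Lip(F)\Lip(\Psi-\Id)$ is false as stated. Indeed, $D(F\circ\Psi-F)(x)=DF(\Psi(x))\bigl(D\Psi(x)-I\bigr)+\bigl(DF(\Psi(x))-DF(x)\bigr)$, and the last term is not controlled by $\Lip(\Psi-\Id)$. It can only be absorbed into the $\delta$-slack by taking $\|\Psi-\Id\|_{C^0}$ small relative to the modulus of continuity of $DF$, and only when the $L^\infty$ norm on the right-hand side is positive.
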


The main ingredient in the proofs is the construction of width functions, invented by Alberti, Cs\"ornyei and Preiss and already used as a main building block in the refined Lusin-type theorem for gradients proved in \cite{DMM}.
The common core of our proof is a directional scalar lemma: under a suitable cone-invisibility condition, a bounded continuous scalar function can be realized as a directional derivative outside a set of arbitrarily small $\mu$-measure, with a uniform Lipschitz bound and with arbitrarily small supremum norm.
Once this is established, the divergence and Jacobian equations follow from standard measure theory arguments and simple algebraic computations.

\begin{rem}[Rectifiable measures as a model case]
The geometric mechanism behind our construction is particularly transparent when the measure is concentrated on a rectifiable set.
Indeed, suppose that $\mu$ is carried by a $k$-rectifiable set with $k<d$.
Locally, such a set admits approximate tangent planes, and one may choose directions transverse to those planes.
On each rectifiable patch, the divergence and Jacobian identities used in this paper suggest that one should solve a scalar problem by prescribing a directional derivative in a transverse direction.

This should be contrasted with the situation for Lusin-type $C^1$ approximation:
by \cite{MMrect}, the Lusin-type approximation of Lipschitz functions by $C^1$ functions characterizes a much more rigid class of measures, namely those that can be decomposed into rectifiable pieces, possibly of different dimensions.
Thus, from the point of view of approximation by $C^1$ functions, measures of rectifiable type behave genuinely differently from generic singular measures.

For the prescription of divergence or Jacobian, however, rectifiability plays a different role.
Even in the rectifiable setting, for a general bounded or continuous datum one still needs the directional scalar lemma in order to keep the solution arbitrarily small in supremum norm while controlling its Lipschitz constant.
Rectifiability mainly simplifies the geometry, since the transverse directions are then immediately available, but it does not change the analytic nature of the problem.
In particular, the possibility of prescribing divergence or Jacobian is not specific to rectifiable measures: the same mechanism remains available for arbitrary singular measures.

The main point of the present work is that one can recover, for a general singular measure, the same type of transverse construction that is geometrically evident in the rectifiable case, even in the absence of any global rectifiable support or globally defined normal directions.
The decomposability bundle plays the role of a measurable tangent structure, and the cone-null/width-function argument provides an effective way to exploit directions transverse to it.
\end{rem}

\begin{rem}[Endpoint estimates and the flat chain conjecture]

The Lusin-type solvability of the prescribed divergence and prescribed Jacobian equation, see Theorem~\ref{thm:div} and Theorem~\ref{thm:jac}, should be compared with the classical failure of endpoint bounded-to-Lipschitz estimates for first-order equations, in the spirit of Ornstein's non-inequality \cite{Ornstein}.
The failure of such estimates for the prescribed divergence equation
$\operatorname{div} u = f,
$ was shown by Preiss \cite{Preiss1997} and, independently and by different methods, by McMullen \cite{McMullen1998}, see also the discussion in \cite{DFT03} and the recent work \cite{DePauw2026}. 
Tak\'a\v{c}'s counterexample \cite{Takac} shows that this obstruction for the prescribed Jacobian equation, $\det D\phi = f$ invalidates Lang's formulation of the flat chain conjecture, which requires pointwise control.

Recall that Ambrosio and Kirchheim \cite{AK00} developed a theory of metric currents in complete metric spaces, extending the classical Federer-Fleming theory beyond the Euclidean setting.
A basic open question in their theory is the so called \emph{flat chain conjecture}; namely whether, in Euclidean spaces, metric currents with compact support coincide with Federer-Fleming flat chains with finite mass; see \cite[Section~11]{AK00}.
The conjecture is known to be true in dimension $k=1$, and in the top-dimensional case $k=d$ it was proved by De Philippis and Rindler \cite[Theorem~1.15]{DPR}.

The second named author and Merlo recently pointed out that a possible route towards the intermediate-dimensional cases $2\le k\le d-1$ would be a suitable Lusin-type solvability theory for prescribed differential forms; more precisely, the argument shows that the conjecture for $k$-currents would follow from an $L^\infty$-to-Lipschitz solvability result for the equation
$d\varphi=\omega$
on sets of arbitrarily large measure, for $k$-forms $\omega$ satisfying the natural orthogonality condition with respect to the bundle $V^k(\mu,\cdot)$; see \cite[Remark~4.1]{MM}.
In \cite[Conjecture~4.1 and Section~4]{DMM} this idea is formulated explicitly, and it is proved there that a positive solution of that conjecture would imply the Ambrosio-Kirchheim flat chain conjecture in full generality.

By contrast with Lang's formulation, the Ambrosio-Kirchheim point of view is naturally compatible with Lusin-type statements, where the relevant differential constraint is imposed only on a set carrying almost all the mass of a singular measure.
In top dimension, the differential-form equation
$d\varphi=\omega$ reduces in coordinates to a prescribed divergence equation: if one identifies a $(d-1)$-form with a vector field $V$, then $d\varphi$ is exactly $(\diver V)\,dx_1\wedge\cdots\wedge dx_d$.
For this reason, the top-dimensional case of the Lusin-type strategy proposed in \cite[Remark~4.1]{MM} and formulated in \cite[Conjecture~4.1 and Section~4]{DMM} is precisely the solvability of the prescribed divergence equation on sets carrying arbitrarily large $\mu$-mass.
Our theorem of Lusin solvability for divergence provides exactly this type of test in the singular setting, and this clarifies why that is the natural substitute for pointwise endpoint estimates in the top-dimensional flat chain conjecture and identifies precisely the mechanism by which, despite the failure of such pointwise estimates, the flat chain conjecture follows in top dimension.
\end{rem}

\begin{rem}[Comparison with Dacorogna-Moser]

The classical Dacorogna-Moser theorem provides smooth solutions to the transport problem between two absolutely continuous measures.
More precisely, if $f,g$ are positive sufficiently regular densities on a bounded smooth domain $\Omega\subset\R^d$ with
$\int_\Omega f=\int_\Omega g,$
then there exists a diffeomorphism $\Phi:\Omega\to\Omega$ such that
\begin{equation}\label{eq:transp}
g(\Phi(x))\,\det D\Phi(x)=f(x)
\qquad\text{for all }x\in\Omega,    
\end{equation}
equivalently,
$\Phi_\#(f\,dx)=g\,dx;$
see \cite{DM90}.
Our result suggests an analogous picture for singular measures, but with an important distinction.

For absolutely continuous measures, the Jacobian determinant describes the infinitesimal distortion of the ambient volume under a change of variables, and the transport equation between two densities takes the form \eqref{eq:transp}.

By contrast, Theorem~\ref{thm:jac} is still an ambient Jacobian statement, and by itself it does not imply a direct push-forward relation of the form
$\Phi_\#\mu = g\mu$
for an arbitrary singular measure $\mu$.
What the theorem does provide is a very regular ambient deformation of the space whose Jacobian is prescribed on sets of arbitrarily large $\mu$-measure.

In this sense, the theorem is about constructing a global change of coordinates whose Jacobian has the prescribed value on a set of arbitrarily large $\mu$-measure, rather than about transporting the measure $\mu$ itself.
\end{rem}
\section*{Acknowledgments}

This work was supported by the Italian Ministry of University and Research (MUR) through the FIS 2 project \textit{SingMeas:"Singular Structures in the Geometry of Measures: decompositions, rigidity, and rectifiability"}, project code FIS-2023-0272ff5 (CUP: E53C25001800001).

\section{Preliminaries}

Throughout the paper, sets and functions are Borel measurable.
Measures are finite, positive and Radon.
If $\mu$ is a measure and $E$ is Borel, $\mu\llcorner E$ denotes the restriction of $\mu$ to $E$.
For $x\in\R^d$ and $r>0$, $B(x,r)$ denotes the open Euclidean ball of center $x$ and radius $r$.

\subsection{The decomposability bundle}

We recall only the part of the theory that is needed in what follows.
Let $\mu$ be a positive Radon measure on $\R^d$.
The decomposability bundle is a Borel map
$$
V(\mu,\cdot):\R^d\to \bigcup_{k\le d}\Gr(k,d),
$$
where $\Gr(k,d)$ is the Grassmannian of unoriented linear subspace of $\mathbb{R}^d$ of dimension $k$. We refer to \cite{AM16} for the definition and for the structural properties.

The crucial fact concerning this tool which is used here is that singular measures do not have full decomposability bundle almost everywhere.

\begin{prop}[{\cite[Theorem 2.4]{ABM}}]\label{prop:nonfullbundle}
Let $\mu$ be a finite Radon measure on $\R^d$.
Then
$$
V(\mu,x)=\R^d \quad\text{for $\mu$-a.e.\ }x
\qquad\Longleftrightarrow\qquad
\mu\ll\LL^d.
$$
In particular, if $\mu\perp\LL^d$, then $V(\mu,x)\neq \R^d$ for $\mu$-a.e.\ $x \in \mathbb{R}^d$.
\end{prop}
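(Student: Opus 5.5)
The plan is to prove the equivalence first and then obtain the ``in particular'' statement from it by a localization argument. Two structural facts about the decomposability bundle from \cite{AM16} will be used. The first is \emph{locality}: if $A$ is Borel, then $V(\mu\llcorner A,x)=V(\mu,x)$ for $\mu$-a.e.\ $x\in A$. The second is the \emph{characterization via normal currents}: $V(\mu,x)$ is, for $\mu$-a.e.\ $x$, spanned by the tangents $\tau(x)$ of normal $1$-currents $T=\tau\|T\|$ with $\mu\llcorner A\ll\|T\|$ for suitable Borel sets $A\ni x$.

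For the direction $\mu\ll\LL^d\Rightarrow V(\mu,x)=\R^d$ a.e., I would write $\mu=\rho\LL^d$. For each coordinate direction $e_i$, Fubini decomposes $\LL^d$ restricted to a large cube into segments parallel to $e_i$. Equivalently, $\LL^d\llcorner Q$ is the mass of the normal current $e_i\,\LL^d\llcorner Q$, whose boundary has finite mass, being a difference of $(d-1)$-dimensional measures on two faces of the cube. Hence $e_i\in V(\LL^d,x)$ for a.e.\ $x\in Q$, so $V(\LL^d,\cdot)=\R^d$ a.e. Since $\mu\ll\LL^d$, the bundle is monotone under absolute continuity, i.e.\ $V(\mu,x)\supseteq V(\LL^d,x)$ for $\mu$-a.e.\ $x$; this follows from locality applied to the sets $\{\rho>0\}$ intersected with cubes. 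Therefore $V(\mu,x)=\R^d$ $\mu$-a.e.

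The converse is the hard step. Assume $V(\mu,x)=\R^d$ for $\mu$-a.e.\ $x$. Using the normal-current characterization and a countable exhaustion, I would partition $\mu$-almost all of $\R^d$ into Borel pieces $A_j$. On each piece I want normal $1$-currents $T_1,\dots,T_d$ with $\mu\llcorner A_j\ll\|T_i\|$ for every $i$, and with tangents $\tau_1(x),\dots,\tau_d(x)$ linearly independent at $\mu$-a.e.\ $x\in A_j$. Writing $T_i=\tau_i\|T_i\|$, the vector measures $T_i$ satisfy $\diver T_i=-\partial T_i$, a finite measure. I would then invoke the De Philippis--Rindler theorem \cite{DPR} on $\mathcal A$-free measures. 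It states that for $\|T_i\|$-a.e.\ point in the singular part, the polar $\tau_i(x)$ lies in the wave cone of the divergence operator, namely $\{\tau:\tau\cdot\xi=0 \text{ for some }\xi\neq0\}$. Applying it simultaneously to $T_1,\dots,T_d$ requires care, since the singular parts of the $\|T_i\|$ may differ. I would use the $d$-tuple version of \cite[Corollary~1.12]{DPR}: if $\mu\ll\|T_i\|$ for all $i$ and the $\tau_i$ span $\R^d$ $\mu$-a.e., then $\mu\ll\LL^d$. I expect this step to be the genuine obstacle, because it rests on the full strength of the De Philippis--Rindler rigidity; everything else is bookkeeping. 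Summing over $j$ then gives $\mu\ll\LL^d$.

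Finally, for the ``in particular'' part, suppose $\mu\perp\LL^d$ and set $A=\{x:V(\mu,x)=\R^d\}$. By locality, $V(\mu\llcorner A,x)=\R^d$ for $\mu\llcorner A$-a.e.\ $x$, so the equivalence yields $\mu\llcorner A\ll\LL^d$. On the other hand, $\mu\llcorner A\le\mu$ is singular with respect to $\LL^d$. Hence $\mu\llcorner A=0$, which means $V(\mu,x)\neq\R^d$ for $\mu$-a.e.\ $x$.
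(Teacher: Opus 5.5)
Your proposal is correct and is essentially the argument behind the cited result: the paper gives no proof of its own and imports the statement from \cite[Theorem 2.4]{ABM}, where it is the combination of the locality principle and the normal-current description of $V(\mu,\cdot)$ from \cite{AM16} with the $d$-tuple rigidity statement \cite[Corollary 1.12]{DPR}, and your localization to $A=\{x: V(\mu,x)=\R^d\}$ is exactly what turns the global equivalence into the ``in particular'' clause. Two small precisions: for $\mu=\rho\LL^d$ you can get $e_i\in V(\mu,x)$ for $\mu$-a.e.\ $x\in Q$ directly from $\mu\llcorner Q\ll\|e_i\,\LL^d\llcorner Q\|$, since locality for restrictions alone does not compare $\rho\LL^d$ with $\LL^d\llcorner\{\rho>0\}$; and the single-current wave-cone statement you quote is vacuous for $d\ge 2$, because every vector is orthogonal to some $\xi\neq0$, so the whole content of the rigidity step lies in applying De Philippis--Rindler to the matrix-valued measure $(T_1,\dots,T_d)$ with row-wise divergence, whose wave cone is the set of singular matrices.
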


\subsection{Cones and cone-null sets}

For $e\in \mathbb{S}^{d-1}$ and $\alpha\in(0,\pi/2)$ we set
$$
C(e,\alpha)\coloneqq \{v\in\R^d:\ v\cdot e\ge \cos\alpha\,|v|\}
$$
the closed cone with axis $e$ and opening angle $\alpha$.
% We also write $Z_e\coloneqq e^\perp$ for the orthogonal complement of the vector subspace generated by $e$.

The following statement allows us to replace a pointwise choice of a direction transverse to the decomposability bundle by a finite family of constant directions. We include the elementary geometric proof for the reader convenience.

\begin{lem}\label{lem:finite-directions}
For every $\alpha \in \left(0,\frac{\pi}{2}\right)$, there exist unit vectors $v_1,\dots,v_N\in \mathbb{S}^{d-1}$, depending only on $\alpha$ and the dimension $d$, with the following property:
for every proper linear subspace $L\subsetneq \R^d$ there exists at least one index $j$ such that
$$
L\cap C\left(v_j, \alpha\right)=\{0\}.
$$
\end{lem}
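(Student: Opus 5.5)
It suffices to treat hyperplanes: every proper subspace $L$ is contained in some hyperplane $H=w^\perp$ with $w\in\mathbb{S}^{d-1}$, and $H\cap C(v_j,\alpha)=\{0\}$ implies $L\cap C(v_j,\alpha)=\{0\}$. So we only need finitely many directions that work for all hyperplanes. The plan is to place the $v_j$ on a fine net of the sphere and, for a given $w$, pick $v_j$ close to $w$ or to $-w$. A direction nearly normal to $H$ sees $H$ only at large angles, so a cone around it of half-angle $\alpha<\pi/2$ meets $H$ only at the origin.

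\textbf{Angular condition.} For $v\in\mathbb{S}^{d-1}$ we have $w^\perp\cap C(v,\alpha)=\{0\}$ if and only if every unit $u\perp w$ satisfies $u\cdot v<\cos\alpha$. Write $v=(v\cdot w)w+v'$ with $v'\perp w$. Then
$$\sup_{u\perp w,\,|u|=1}u\cdot v=|v'|=\sqrt{1-(v\cdot w)^2}.$$
Hence the condition is $\sqrt{1-(v\cdot w)^2}<\cos\alpha$, which is equivalent to $|v\cdot w|>\sin\alpha$. In words, the angle between $v$ and $\pm w$ must be less than $\pi/2-\alpha$.

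\textbf{Choice of directions.} Set $\eta:=\pi/2-\alpha>0$. By compactness of $\mathbb{S}^{d-1}$, choose a finite set $v_1,\dots,v_N$ such that every $w\in\mathbb{S}^{d-1}$ lies at geodesic distance less than $\eta$ from some $v_j$. Such a net depends only on $\alpha$ and $d$. Given a hyperplane $w^\perp$, pick $j$ with $\angle(v_j,w)<\eta$. Then
$$v_j\cdot w=\cos\angle(v_j,w)>\cos\eta=\sin\alpha,$$
so the angular condition holds and $w^\perp\cap C(v_j,\alpha)=\{0\}$. This covers arbitrary proper $L$ by the reduction in the first paragraph.

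\textbf{Potential pitfalls.} Two points need care. First, the cone is closed, so we need the strict inequality $|v\cdot w|>\sin\alpha$, which the strict inequality in the net condition supplies. Second, the supremum computation is elementary but must be stated correctly; the degenerate case $v'=0$ (that is, $v=\pm w$) is harmless. Nothing else in the argument is difficult.
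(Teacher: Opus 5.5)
Your proof is correct and follows essentially the same route as the paper: take a finite net on $\mathbb{S}^{d-1}$, pick a unit normal $n\in L^\perp$ (your reduction to hyperplanes is the same step), choose $v_j$ close to $n$, and bound $v\cdot v_j$ for $v\in L$ by the length of the component of $v_j$ orthogonal to $n$. The only difference is bookkeeping. You compute the exact threshold $|v_j\cdot n|>\sin\alpha$ and use a net of radius $\pi/2-\alpha$ with strict inequality. The paper instead uses a net of radius $\frac12(\pi/2-\alpha)$ with non-strict inequality and absorbs the slack via $\sqrt{(1-\sin\alpha)/2}<\cos\alpha$.
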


\begin{proof}
Choose a finite geodesic $\frac{1}{2}\left(\frac{\pi}{2}- \alpha\right)$-net $\{v_1,\dots,v_N\}\subset \mathbb{S}^{d-1}$, meaning that for every $n\in \mathbb{S}^{d-1}$ there exists $j$ such that
$$
n \cdot v_j \geq \cos \left( \frac{1}{2} \left(\frac{\pi}{2}- \alpha \right)\right).
$$
Let $L\subsetneq\R^d$ be a proper subspace and choose a unit vector $n\in L^\perp$. Then, by the definition of the net, there exists an index $j$ such that $n\cdot v_j\ge \cos \left( \frac{1}{2} \left(\frac{\pi}{2}- \alpha \right)\right)$. Thus, for every $v \in L \setminus \{0\}$,  it holds
$$
(v \cdot v_j)^2
=
\left( v \cdot \big(v_j - (v_j \cdot n) n\big) \right)^2
\leq
|v|^2\big|v_j - (v_j \cdot n) n\big|^2
=
|v|^2 \big( 1- (v_j \cdot n)^2\big)
\leq
|v|^2 \sin^2\left( \frac{1}{2} \left(\frac{\pi}{2}- \alpha \right)\right),
$$
which implies $v \notin C\left(v_j, \alpha\right)$, since $\sin\left( \frac{1}{2} \left(\frac{\pi}{2}- \alpha \right)\right) = \sqrt\frac{1-\sin \alpha}{2}< \cos \alpha$ for $\alpha \in \left(0,\frac{\pi}{2}\right)$.
\end{proof}

\begin{defn}
A Borel set $E\subset\R^d$ is called \emph{$C(e,\alpha)$-null} if for every Lipschitz curve $\gamma:[0,1]\to\R^d$ such that $\dot\gamma(t)\in C(e,\alpha)$ for a.e.\ $t \in [0,1]$, one has
$$
\HH^1\bigl(E\cap \gamma([0,1])\bigr)=0.
$$
\end{defn}

\begin{rem}\label{rem:minuscone}
If $L\subset\R^d$ is a linear subspace and $L\cap C(e,\alpha)=\{0\}$, then also $L\cap C(-e,\alpha)=\{0\}$.
Indeed, if $v\in L\cap C(-e,\alpha)$, then $-v\in L\cap C(e,\alpha)$.
\end{rem}

The next lemma is one of the main local tools. See \cite[Lemma 7.5]{AM16} for a proof and also \cite[Lemma 2.3]{DMM}.

\begin{lem}[Concentration on cone-null sets]\label{lem:Cnull}
Let $B\subset\R^d$ be a Borel set, let $\mu$ be a Radon measure, and let $e\in \mathbb{S}^{d-1}$, $\alpha\in(0,\pi/2)$ be such that
$$
V(\mu,x)\cap C(e,\alpha)=\{0\}
\qquad\text{for $\mu$-a.e.\ }x\in B.
$$
Then there exists a $C(e,\alpha)$-null Borel set $F\subset B$ such that $\mu(B\setminus F)=0$.
\end{lem}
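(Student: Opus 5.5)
The plan is to split $\mu\llcorner B$ into a part that is "seen" by Lipschitz curves with tangents in the cone and a part that is invisible to all such curves. Then the hypothesis on the decomposability bundle rules out the first part.

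\emph{Step 1 (family of curve measures).} Let $\mathcal G$ be the family of measures $\HH^1\llcorner\gamma([0,1])$, where $\gamma\colon[0,1]\to\R^d$ is Lipschitz with $\dot\gamma(t)\in C(e,\alpha)$ for a.e.\ $t$. Up to reparametrization, all such $\gamma$ may be taken injective and $1$-Lipschitz.

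\emph{Step 2 (Rainwater-type decomposition).} I would apply the standard decomposition of a measure with respect to a family of measures, as in \cite{AM16}. It gives $\mu\llcorner B=\mu_a+\mu_s$ with two properties:
\begin{itemize}
\item $\mu_s$ is concentrated on a Borel set $E$ with $\HH^1(E\cap\gamma([0,1]))=0$ for every admissible $\gamma$;
\item $\mu_a$ is absolutely continuous with respect to $\mathcal G$, meaning that every $\mathcal G$-null Borel set is $\mu_a$-null.
\end{itemize}
To get this, maximize $\mu_s$-type mass over $\mathcal G$-null Borel sets. A sup is realized by a countable union, since a countable union of $\mathcal G$-null sets is $\mathcal G$-null, so $E$ can be chosen Borel. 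Now set $F\coloneqq E\cap B$. By construction $F$ is $C(e,\alpha)$-null, and $\mu(B\setminus F)=\mu_a(\R^d)$. It therefore suffices to show $\mu_a=0$.

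\emph{Step 3 ($\mu_a=0$).} Suppose $\mu_a\neq0$. Using the structural results of \cite{AM16}, I would upgrade absolute continuity with respect to $\mathcal G$ to a representation. Concretely, I would aim for a nonzero measure $\nu\le\mu_a$ of the form $\nu=\int \HH^1\llcorner E_t\,dP(t)$, where each $E_t$ is a Borel subset of an admissible curve $\gamma_t$. Such a representation forces the curve tangents $\dot\gamma_t(x)$ to belong to $V(\nu,x)$ for $\nu$-a.e.\ $x$. Moreover, the locality property $V(\nu,x)=V(\mu,x)$ for $\nu$-a.e.\ $x$, valid since $\nu\ll\mu$, turns this into a statement about $\mu$. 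Hence $V(\mu,x)$ contains a nonzero vector of $C(e,\alpha)$ for $x$ in a set of positive $\mu$-measure inside $B$. This contradicts the hypothesis.

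\emph{Main obstacle.} I expect the hardest part to be Step 3, namely passing from the mere absolute continuity of $\mu_a$ with respect to $\mathcal G$ to a genuine integral representation by curve pieces. This needs a measurable selection of curves and a Fubini-type argument, and it is exactly what the theory of \cite{AM16} provides.
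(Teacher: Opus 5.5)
Your proposal is correct and reconstructs the argument behind the result that the paper does not prove but simply cites (\cite[Lemma 7.5]{AM16}). You decompose $\mu\llcorner B$ with respect to the family of measures $\HH^1$ restricted to $C(e,\alpha)$-curves, so the singular part lives on a $C(e,\alpha)$-null set, and you exclude the absolutely continuous part via an integral representation by curve pieces together with the definition and locality of $V(\mu,\cdot)$; this is the same route, and the one nontrivial step you defer (the Rainwater-type upgrade from absolute continuity with respect to the curve family to a representation $\int \HH^1\llcorner E_t\,dP(t)$) is exactly what \cite{AM16} supplies.
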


\subsection{Width functions}

The second local tool is the existence of a so-called width function. See \cite[Lemma 4.12]{AM16} for a proof and also \cite[Lemma 2.4]{DMM}.

\begin{lem}[Width function]\label{lem:width}
Let $e\in \mathbb{S}^{d-1}$, $\alpha\in(0,\pi/2)$ and let $E\subset\R^d$ be compact and $C(e,\alpha)$-null.
Then for every $\zeta>0$ there exists a function $\varphi\in C^\infty(\R^d)$ such that:
\begin{enumerate}[label=\rm(\roman*),leftmargin=2em]
\item $0\le \varphi(x)\le \zeta$ for every $x \in \R^d$;
\item $0\le \partial_e\varphi(x)\le 1$ for every $x \in \R^d$ and $\partial_e\varphi(x)=1$ for all $x \in E$;
\item $|\partial_v\varphi(x)| \le \dfrac{1}{\tan\alpha}$ for every $v \in e^\perp$ with $|v|=1$ and every $x \in \R^d$.
% \item $|d_{Z_e}\varphi(x)| \le \dfrac{1}{\tan\alpha}$ for every $x \in \R^d$.
\end{enumerate}
\end{lem}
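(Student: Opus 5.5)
The plan is the Alberti–Csörnyei–Preiss construction: define $\varphi$ as a ``maximal $e$-length spent in a neighbourhood of $E$'' along cone curves, check that it is monotone along the cone, then mollify. It has three steps.

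\textbf{Step 1: a neighbourhood of small width.} First I reduce to a neighbourhood of $E$ that every cone curve crosses only briefly. Choose coordinates $x=(t,y)$ with $t=x\cdot e$ and $y\in e^\perp$. Every Lipschitz curve with velocity in $C(e,\alpha)$ can be reparametrized as a graph
$$\gamma(t)=te+h(t),\qquad \Lip(h)\le\tan\alpha .$$
For such a graph $\HH^1(E\cap\gamma)\ge\LL^1(\{t:\gamma(t)\in E\})$. I claim there is an open set $U\supset E$ such that
$$\LL^1(\{t:\gamma(t)\in \overline U\})\le\zeta$$
for every such graph. Suppose not, and take $U_n$ to be the $1/n$-neighbourhood of $E$ with bad graphs $\gamma_n$. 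Since $E$ is compact, the relevant $t$-ranges lie in a fixed bounded interval $I$, and the $h_n$ can be taken equibounded and equi-Lipschitz. By Arzelà–Ascoli, $\gamma_n\to\gamma$ uniformly along a subsequence, and the limit is again a cone graph. Let $A_n=\{t\in I:\gamma_n(t)\in\overline U_n\}$; each $A_n$ has measure at least $\zeta$. The points of $\limsup_n A_n$ are mapped by $\gamma$ into $E$, and $\LL^1(\limsup_n A_n)\ge\limsup_n\LL^1(A_n)\ge\zeta$. Hence $\LL^1(\{t:\gamma(t)\in E\})\ge\zeta>0$, which contradicts the $C(e,\alpha)$-nullity of $E$. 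I expect this compactness step to be the main obstacle: the reparametrization, the boundedness of the curves, and the passage from Lebesgue measure in $t$ to $\HH^1$ on the curve all have to be checked carefully.

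\textbf{Step 2: the width function.} Next I define
$$\psi(x)=\sup\Big\{\LL^1\big(\{s\le t_x:\gamma(s)\in U\}\big)\Big\},$$
where the supremum runs over cone graphs $\gamma$ ending at $x=\gamma(t_x)$. By Step 1, $0\le\psi\le\zeta$. The following properties come from concatenating curves.
\begin{enumerate}[label=(\alph*),leftmargin=2em]
\item $\psi$ is nondecreasing along directions in $C(e,\alpha)$.
\item $\psi(x+\tau e)\le\psi(x)+\tau$ for $\tau\ge0$, because extending a curve by $\tau$ in the $e$-coordinate adds at most $\tau$ to the measure.
\item $\psi(x+\tau e)=\psi(x)+\tau$ whenever the segment $[x,x+\tau e]$ lies in $U$.
\item For $v\in e^\perp$ with $|v|=1$, the point $x+sv$ is reached from $x-|s|\cot\alpha\,e$ along a cone direction. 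Combining this with (a) and (b) gives $|\psi(x+sv)-\psi(x)|\le |s|\cot\alpha$.
\end{enumerate}

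\textbf{Step 3: mollification.} Finally I set $\varphi=\psi*\rho_r$, where $\rho_r$ is a standard mollifier with $r<\operatorname{dist}(E,\R^d\setminus U)$. Properties (i)–(iii) are stable under convolution: monotonicity and the one-sided bounds of Step 2 pass to averages and then to derivatives. In particular $0\le\partial_e\varphi\le1$ and $|\partial_v\varphi|\le 1/\tan\alpha$ everywhere. Moreover, for $x\in E$ the ball $B(x,r)$ is contained in $U$, so $\psi$ has slope exactly $1$ in the $e$ direction on that ball, and therefore $\partial_e\varphi(x)=1$. The bound $0\le\varphi\le\zeta$ also survives convolution, and $\varphi\in C^\infty(\R^d)$.
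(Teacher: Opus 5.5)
Your Step 1 is correct, but Step 2 fails at (b), and (c), (d) and Step 3 all depend on it. Your justification only covers cone graphs ending at $x+\tau e$ that pass through $x$. A general one crosses $\{t=t_x\}$ at some $x'$ with $|x'-x|\le\tau\tan\alpha$, and $\psi(x')-\psi(x)$ is controlled only by (d), which itself uses (b). In fact (b) and (c) are false for your $\psi$.

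Take $d=2$, $e=e_1$, $\alpha=\pi/4$, so cone graphs are $s\mapsto(s,h(s))$ with $\Lip(h)\le1$, and let $E=\{(s,-2s):|s|\le1\}$. Along every cone graph, $g(s):=h(s)+2s$ satisfies $g'\ge1$. Hence $E$ is compact and $C(e,\alpha)$-null. Every $\rho$-neighbourhood $U$ of $E$ lies in the strip $\{|y+2s|<\delta\}$ with $\delta=\sqrt5\,\rho$, and coincides with it near the middle of $E$. For $x=(t_x,y_x)$ there with $|y_x+2t_x|<\delta$, the monotonicity of $g$ gives $\psi(x)\le y_x+2t_x+\delta$. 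The graph of slope $-1$ through $x$ attains this bound, so $\psi(x)=y_x+2t_x+\delta$. Thus $\psi$ is affine on $B(x,r)$ for $x$ in the middle of $E$ and every admissible $r$, and $\partial_e\varphi=2$ there.

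With slope $-m$, $m\in(1,2)$, instead of $-2$, you get $\partial_e\psi=m/(m-1)$ and $\partial_y\psi=1/(m-1)>\cot\alpha$. So not even a bound depending only on $\alpha$ survives. The underlying reason is that your $\psi$ is the value function of a problem with the hard constraint $|h'|\le\tan\alpha$. Formally, then, $\partial_e\psi=\mathbf 1_U+\tan\alpha\,|\nabla_{e^\perp}\psi|$, which exceeds $1$ wherever $\psi$ varies transversally.

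The paper does not prove this lemma; it quotes it from [AM16, Lemma 4.12] and [DMM, Lemma 2.4]. One way to repair your argument is to replace the hard cone constraint by a price on transversal motion:
\[
\psi(x)=\sup\Bigl\{\LL^1\bigl(\{s\le t_x:(s,h(s))\in U\}\bigr)-\cot\alpha\,\operatorname{Var}(h)\Bigr\},
\]
with the supremum over $h$ of bounded variation with $h(t_x)=y_x$.
\begin{itemize}
\item A jump at the endpoint gives $|\psi(t,y)-\psi(t,y')|\le\cot\alpha\,|y-y'|$ directly.
\item For (b), restrict a path ending at $(t+\tau,y)$ to $s\le t$. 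Then pay for the displacement $|h(t)-y|\le\operatorname{Var}(h;[t,t+\tau])$ using the previous estimate.
\item Property (c) and Step 3 then go through as you wrote, and cone monotonicity is no longer needed.
\end{itemize}
The price is paid in Step 1: you now need $\psi\le\zeta$, which is a penalized width bound. Your compactness scheme still works, with Helly's theorem in place of Arzel\`a--Ascoli. It produces a BV path $h$ with $\LL^1(S)\ge\zeta+\cot\alpha\,\operatorname{Var}(h)$, where $S=\{s:(s,h(s))\in E\}$. Since $\int_S|h'|\le\operatorname{Var}(h)$, the set of $s\in S$ with $|h'(s)|<\tan\alpha$ has positive measure. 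On a positive-measure part of it, $h$ is $\tan\alpha$-Lipschitz. Extending that piece by linear interpolation gives a cone graph meeting $E$ in positive $\HH^1$-measure, which contradicts nullity.
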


\begin{rem}[Gradient bound]\label{rem:gradphi}
A direct consequence of Lemma~\ref{lem:width} is that
$$
|D\varphi(x)|
% \le |\partial_e\varphi(x)| + |d_{Z_e}\varphi(x)|
\le 1+\frac{1}{\tan\alpha}
\qquad\forall x\in\R^d.
$$
We set
$$
c_\alpha\coloneqq 1+\frac{1}{\tan\alpha}.
$$
\end{rem}

\section{A local scalar property}

The common core of our construction is the following local scalar result.

\begin{prop}\label{prop:local}
Let $U\subset\R^d$ be open, let $\nu$ be a finite Radon measure on $U$, let $e\in \mathbb{S}^{d-1}$, let $\alpha\in(0,\pi/2)$,
and let $\lambda\in C(U)\cap L^\infty(U, \nu)$.
Assume that there exists a $C(e,\alpha)$-null set $F\subset U$ such that $\nu(U\setminus F)=0$.
Then for every $\eta>0$ and every $\delta>0$ there exist a compact set $K\subset U$ and a function $u\in C^1_c(U)$ such that
$$
\nu(U\setminus K)<\eta,
\qquad
\partial_e u(x)=\lambda(x)\quad\forall x\in K,
$$
and
$$
\|Du\|_{C^0(U)}\le (1+\delta)c_\alpha \|\lambda\|_{L^\infty(U,\nu)},
\qquad
\|u\|_{C^0(U)}\le \eta.
$$
\end{prop}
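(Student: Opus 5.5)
The plan is to build $u$ as a finite sum of localized width functions, each multiplied by a frozen value of $\lambda$:
$$u=\sum_i \lambda(x_i)\,\chi_i\,\varphi_i,$$
so that $\partial_e u\approx\sum_i \lambda(x_i)\chi_i\approx\lambda$ on a large compact set. The exact identity $\partial_e u=\lambda$ on $K$ will then be obtained by replacing the piecewise constants with $\lambda$ itself. Concretely, I would take $u=\sum_i \chi_i\,\lambda_i\,\varphi_i$ with $\lambda_i$ a $C^1$ approximation of $\lambda$, and fix the remaining error with one more width function.

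\textbf{Step 1: reduction to a compact set and regular data.} Put $M=\|\lambda\|_{L^\infty(U,\nu)}$. By inner regularity, choose a compact $E\subset F\cap\{|\lambda|\le M\}$ with $\nu(U\setminus E)<\eta/2$. Being a subset of $F$, the set $E$ is still $C(e,\alpha)$-null. Since $\lambda$ is continuous, we may shrink to a neighbourhood of $E$ on which $|\lambda|\le(1+\delta/4)M$. Mollification then gives a smooth $\tilde\lambda$ with compact support in $U$, $|\tilde\lambda|\le(1+\delta/4)M$ everywhere, $\|D\tilde\lambda\|_\infty\le C_0$ for some constant $C_0$ (which may be huge), and $\tilde\lambda=\lambda$ on $E$.

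That last equality is the delicate point, since a plain mollification only gives $|\tilde\lambda-\lambda|<\theta$ on $E$. I would handle it by making the ansatz itself absorb the error. Set $u=\lambda\varphi$ formally; for merely continuous $\lambda$ this is not $C^1$. Instead, take
$$u=\tilde\lambda\,\varphi+w,$$
where $\varphi$ is a width function for $E$ from Lemma~\ref{lem:width} with parameter $\zeta$. Then on $E$,
$$\partial_e u=\tilde\lambda+\varphi\,\partial_e\tilde\lambda+\partial_e w.$$
The term $\varphi\,\partial_e\tilde\lambda$ is bounded by $\zeta C_0$, and globally $\|\varphi\, D\tilde\lambda\|\le\zeta C_0$. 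Choosing $\zeta\ll\delta M/C_0$ makes this term negligible in the gradient bound, while
$$|\tilde\lambda\,D\varphi|\le(1+\delta/4)M\,c_\alpha$$
by Remark~\ref{rem:gradphi}. Multiplying by a cutoff $\chi\in C^\infty_c(U)$ with $\chi=1$ near $E$ keeps the support compact, at the cost of a term $\zeta\|D\chi\|$, which is again made small by taking $\zeta$ small. The sup norm is at most $(1+\delta)M\zeta\le\eta$.

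\textbf{Step 2: the residual, which is the main obstacle.} The residual $r=\lambda-\tilde\lambda-\varphi\,\partial_e\tilde\lambda$ on $E$ is continuous and small, of size at most $2\theta$, but nonzero. I would iterate: apply the same construction to the datum $r$ on a slightly smaller compact set $E_2\subset E$, with parameters $\theta_2,\zeta_2$, and continue. This produces a series
$$u=\sum_k\chi_k\,\tilde r_k\,\varphi_k.$$
The gradient of the $k$-th term is bounded by about $c_\alpha\|r_{k}\|+\text{small}$, with $\|r_k\|\le 2^{-k}\delta M$, so the $C^1$ series converges. The total gradient is then at most $(1+\delta)c_\alpha M$, and the total sup norm is at most $\sum\zeta_k\le\eta$.

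On $K=\bigcap_k E_k$, whose measure satisfies $\nu(U\setminus K)<\eta$ when $\nu(E_k\setminus E_{k+1})<\eta 2^{-k-2}$, the partial sums satisfy $\partial_e u_n=\lambda-r_{n}\to\lambda$. Hence $\partial_e u=\lambda$ on $K$. The point needing care is that each $\partial_e\varphi_k=1$ on $E_k\supset K$, and that the $C^1$ convergence is uniform so that $u\in C^1_c(U)$. Uniformity follows from the geometric decay of the gradient bounds, and compact support from taking all $\chi_k$ supported in a fixed compact subset of $U$.
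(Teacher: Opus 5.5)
Your proof is correct. Its overall architecture is the same as the paper's: iterative correction of a continuous residual, width functions whose sup bound $\zeta$ is chosen only after the coefficient multiplying them is fixed, $C^1$ convergence of the series, and $K$ taken as the set on which all residual bounds hold.

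\textbf{Where your route differs: the local step.} The paper \emph{freezes} the residual. It partitions $K_n$ into Borel pieces on which $r_n$ oscillates by at most $t^{n+1}M$, and shrinks these to disjoint compact sets $L_{n,i}$ (losing $\eta 2^{-n-2}$ of mass). It then takes $u_{n+1}=\sum_i r_n(y_{n,i})\,\chi_{n,i}\varphi_{n,i}$, with one width function per piece and disjoint cutoffs, so the only cross term is $\varphi_{n,i}D\chi_{n,i}$. You instead multiply a single width function by a mollified residual $\tilde r_k$. You kill the cross term $\varphi_k D\tilde r_k$ by choosing $\zeta_k\le \theta_k/\|D\tilde r_k\|_\infty$ after the mollification scale is fixed, which is a consistent order of choices.

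\textbf{What each approach buys.} Your construction needs no partition and only one width function per step. It also loses no measure during the iteration. Since $r_k=\lambda-\partial_e u_k$ is continuous on all of $U$ and small on $E$, you can reuse the same compact set $E$ at every step. So the shrinking $E_{k+1}\subset E_k$ you mention is unnecessary, and $K=E$. The price is that your gradient bound needs $|\tilde r_k|$ small on the whole support of $\tilde r_k\varphi_k$, not just on $E$, because $|D\varphi_k|\le c_\alpha$ holds everywhere. That is why you need smallness of the residual on a neighbourhood of $E$ (from continuity on $U$, or via a Tietze extension) and a cutoff before mollifying. The paper's frozen coefficients satisfy $|a_{n,i}|\le\|r_n\|_{C^0(K_n)}$ automatically, so it only uses continuity of $r_n$ on $K_n$.

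\textbf{A bookkeeping correction.} With $\|r_k\|\le 2^{-k}\delta M$ for $k\ge 1$, added to the factor $(1+\delta/4)$ already spent in Step 1, the total gradient comes out at about $(1+5\delta/4)c_\alpha M$. This exceeds the required bound. Take, e.g., $\|r_k\|\le 2^{-k-1}\delta M$ instead. Similarly, each term has sup norm at most $\|\tilde r_k\|\,\zeta_k$, so for the sup bound you should require $\sum_k\zeta_k\le \eta/((1+\delta)M)$ rather than $\sum_k\zeta_k\le\eta$.
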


\begin{proof}
 Set $M \coloneqq \|\lambda\|_{L^\infty(U,\nu)}$.
If $M=0$, taking $u\equiv 0$ and any compact set $K\subset U$ with $\nu(U\setminus K)<\eta$ gives the conclusion.
Hence, we may assume $M>0$.
Fix $t\in \left(0,\frac{1}{2}\right)$ such that
\begin{equation}\label{eq:t_geom_piccola}
    \sum_{n=0}^{+\infty}t^n + \sum_{n=0}^{+\infty}t^{2n+4} =\frac{1}{1-t} + \frac{t^4}{1-t^2}< 1+\min\{\delta,\eta\}.
\end{equation}

By inner regularity, choose a compact set $K_0\subset F$ such that
$$
\nu(U\setminus K_0)<\frac{\eta}{2}.
$$
Since $K_0\Subset U$, there exists an open set $W\subset U$ with
$$
K_0\Subset W\Subset U.
$$
We will construct inductively a decreasing sequence of compact subsets of $W$
$$
K_0\supset K_1\supset K_2\supset\cdots
$$
and, for every $n\geq 0$, a sequence of functions $u_{n+1}\in C^1_c(W)$
such that, setting
$$
% U_0\coloneqq 0, \qquad U_n\coloneqq \sum_{m=1}^{n}u_{m}\qquad
r_n\coloneqq \lambda-\partial_e \sum_{k=1}^n u_k,
$$
the following estimates hold for every $n\ge 0$:
\begin{align}
\nu(K_n\setminus K_{n+1}) &< \frac{\eta}{2^{n+2}}, \label{eq:loss}\\
\|r_n\|_{C^0(K_n)} &\le t^n M, \label{eq:residual}\\
\|Du_{n+1}\|_{C^0(U)} &\le t^n M\bigl(c_\alpha+t^{n+4}\bigr), \label{eq:gradstep}\\
\|u_{n+1}\|_{C^0(U)} &\le \tau\,t^{2n+4}M, \label{eq:supstep}
\end{align}
where
$$
\tau\coloneqq \min\left\{1,\frac{\delta}{M}, \frac{\eta}{M}\right\}.
$$
Since $r_0=\lambda$, then \eqref{eq:residual} is true at $n=0$.\\

Assume by induction that $K_n$ and $u_n$ are already defined and satisfy the estimates.
Since $r_n$ is continuous on the compact set $K_n$, it is uniformly continuous, namely there exists $\rho_n>0$ such that
$$
|r_n(x)-r_n(y)|\le t^{n+1}M
\qquad\forall x,y\in K_n,\ |x-y|<\rho_n.
$$
Choose a finite covering
$$
K_n\subset \bigcup_{i=1}^{N_n} B(x_{n,i},\rho_n/2).
$$
Define a Borel partition of $K_n$ by
$$
E_{n,1}\coloneqq K_n\cap B(x_{n,1},\rho_n/2),
$$
and for $i\ge 2$,
$$
E_{n,i}\coloneqq \Bigl(K_n\cap B(x_{n,i},\rho_n/2)\Bigr)\setminus \bigcup_{m<i}E_{n,m}.
$$
Then the sets $E_{n,i}$ are pairwise disjoint, their union is $K_n$ and $\diam(E_{n,i})\leq \rho_n$ for every $i$.
Hence the oscillation of $r_n$ on $E_{n,i}$ satisfies
$$
\osc_{E_{n,i}}(r_n)\le t^{n+1}M.
$$
By inner regularity, for every $i$ choose a compact set $L_{n,i}\subset E_{n,i}$ such that $\nu(E_{n,i}\setminus L_{n,i})<\frac{\eta}{2^{n+2}N_n}$ and set
$$
K_{n+1}\coloneqq \bigcup_{i=1}^{N_n}L_{n,i}.
$$
Then $K_{n+1}$ is compact, $K_{n+1}\subset K_n$, and
$$
\nu(K_n\setminus K_{n+1})
=\sum_{i=1}^{N_n}\nu(E_{n,i}\setminus L_{n,i})
< \frac{\eta}{2^{n+2}}.
$$
This proves \eqref{eq:loss}.

Since the compact sets $L_{n,i}$ are pairwise disjoint and contained in $W$, there exist pairwise disjoint open sets $O_{n,i}\subset W$ with $L_{n,i}\Subset O_{n,i}\Subset W$.
Choose cutoff functions $\chi_{n,i}\in C^\infty_c(O_{n,i})$ with $\chi_{n,i}\equiv 1$ in a neighborhood of $L_{n,i}$ and such that $0\le \chi_{n,i}\le 1$.

Since each $L_{n,i}\subset K_0\subset F$ and $F$ is $C(e,\alpha)$-null,
thus for every $i$ there exists $\varphi_{n,i}\in C^\infty(\R^d)$ satisfying the conclusions of Lemma~\ref{lem:width} with
$$
E=L_{n,i},
\qquad
\|\varphi_{n,i}\|_{C^0(\mathbb{R}^d)}
\leq
\zeta
=\dfrac{\tau\,t^{n+4}}{1+\max_i \|D\chi_{n,i}\|_{C^0(W)}} .
$$
Choosing points $y_{n,i}\in L_{n,i}$, we can set $a_{n,i}\coloneqq r_n(y_{n,i})$ and define
$$
w_{n,i}\coloneqq a_{n,i}\,\chi_{n,i}\,\varphi_{n,i},
\qquad
u_{n+1}\coloneqq \sum_{i=1}^{N_n}w_{n,i}.
$$
Since the supports of $\chi_{n,i}$ are pairwise disjoint, the supports of $w_{n,i}$ are pairwise disjoint as well.
Therefore $u_{n+1}\in C^1_c(W)$ and
$$
\|Du_{n+1}\|_{C^0(W)} = \max_i \|Dw_{n,i}\|_{C^0(W)},
\qquad
\|u_{n+1}\|_{C^0(W)} = \max_i \|w_{n,i}\|_{C^0(W)}.
$$

By Leibniz' rule,
$$
Dw_{n,i}=a_{n,i}\bigl(\chi_{n,i}D\varphi_{n,i}+\varphi_{n,i}D\chi_{n,i}\bigr),
$$
hence
$$
\|Dw_{n,i}\|_{C^0(W)}
\le \|r_n\|_{C^0(K_n)}\bigl(c_\alpha+\|\varphi_{n,i}\|_{C^0(W)} \|D\chi_{n,i}\|_{C^0(W)}\bigr)
\le t^{n}M\bigl(c_\alpha+t^{n+4}\bigr),
$$
which is \eqref{eq:gradstep}.
Similarly,
$$
\|w_{n,i}\|_{C^0(W)}
\le |a_{n,i}|\,\|\varphi_{n,i}\|_{C^0(W)}
\le t^{n}M\cdot \tau t^{n+4}
= \tau\,t^{2n+4}M,
$$
so \eqref{eq:supstep} follows.

It remains to control the new residual on $K_{n+1}$.
Fix $x\in L_{n,i}$.
Since $\chi_{n,i}\equiv 1$ on a neighborhood of $L_{n,i}$, we have $D\chi_{n,i}(x)=0$.
Since the supports are disjoint, all other functions $w_{n,m}$ with $m\neq i$ vanish in a neighborhood of $x$.
Hence
$$
\partial_e u_{n+1}(x)=\partial_e w_{n,i}(x)=a_{n,i}\,\partial_e\varphi_{n,i}(x)=a_{n,i}.
$$
Therefore
$$
r_{n+1}(x)=r_n(x)-\partial_e u_{n+1}(x)=r_n(x)-a_{n,i}.
$$
Since $x,y_{n,i}\in E_{n,i}$, we obtain
$$
|r_{n+1}(x)|
\le \osc_{E_{n,i}}(r_n)
\le t^{n+1}M.
$$
Thus $\|r_{n+1}\|_{C^0(K_{n+1})}\le t^{n+1}M$, \eqref{eq:residual} is proved and the induction is complete.

To conclude the proof of the proposition, observe that, by \eqref{eq:supstep}, it holds
$$
\sum_{n=0}^\infty \|u_{n+1}\|_{C^0(U)}
\le \tau M \sum_{n=0}^\infty t^{2n+4}
\le \frac{\eta}{8}
< \eta.
$$
and, by \eqref{eq:gradstep},
$$
\sum_{n=0}^\infty \|Du_{n+1}\|_{C^0(U)}
\le M\sum_{n=0}^\infty t^{n}\bigl(c_\alpha+t^{n+4}\bigr)
\overset{\eqref{eq:t_geom_piccola}}{<} (1+\delta)c_\alpha M.
$$
Thus the series $\sum_{n=0}^\infty u_{n+1}$ converges, in the $C^1$-norm, to a function $u \in C^1_c(W)$ with
$$
\|u\|_{C^0(U)} < \eta,
\qquad
\|Du\|_{C^0(U)}\le (1+\delta)c_\alpha M.
$$
Finally set $K\coloneqq \bigcap_{n=0}^\infty K_n$.
Since $K_n$ are decreasing compact sets, $K$ is compact.
Moreover, by \eqref{eq:loss},
$$
\nu(U\setminus K)
\le \nu(U\setminus K_0)+\sum_{n=0}^\infty \nu(K_n\setminus K_{n+1})
< \frac{\eta}{2}+\sum_{n=0}^\infty \frac{\eta}{2^{n+2}}
=\eta.
$$
If $x\in K$, then $x\in K_n$ for every $n$, and by \eqref{eq:residual}
$$
|r_n(x)|=\left|\lambda(x)-\partial_e \sum_{k=1}^{n}u_k(x)\right|\le t^{n}M\to 0.
$$
Since $\sum_{k=1}^{n}u_k\to u$ in $C^1$ as $n \to +\infty$, we conclude that
$$
\partial_e u(x)=\lambda(x)
\qquad\forall x\in K,
$$
completing the proof.
\end{proof}

\section{Prescribed divergence}

We now derive the prescribed divergence theorem from the directional scalar lemma.

\begin{proof}[Proof of Theorem \ref{thm:div}]
Up to modifying $f$ on a set of arbitrarily small measure, we can assume that $f \in L^{\infty}(\Omega,\mu)$ and call $M \coloneqq \|f\|_{L^{\infty}(\Omega,\mu)}$.
% If $M=0$, then the conclusion is trivial with $V\equiv 0$, hence we assume $M>0$.

By Lusin's theorem there exists a compact set $C\subset \Omega$ such that $\mu(\Omega\setminus C)<\frac{\varepsilon}{3}$ and $f|_C$ is continuous.

Let us fix $\alpha \in \left(0,\frac{\pi}{2}\right)$ and $\Tilde{\delta}\in \left(0,\frac{1}{2}\right)$ such that
\begin{equation}\label{eq:alpha_delta_div}
    c_\alpha (1+\Tilde{\delta}) = \left(1+\frac{1}{\tan \alpha} \right)(1+\Tilde{\delta}) < 1+\delta
\end{equation}
and let $v_1,\dots,v_N$ be the directions given by Lemma~\ref{lem:finite-directions}.

For every $j=1,\dots,N$ define
$$
A_j\coloneqq \bigl\{x\in C:\ V(\mu,x)\cap C(v_j,\alpha)=\{0\}\bigr\}.
$$
Since $x\mapsto V(\mu,x)$ is Borel and the set
$$
\{L\in \Gr(k,d): L\cap C(v_j,\alpha)=\{0\}\}
$$
is open in the Grassmannian, each $A_j$ is Borel.
By Proposition~\ref{prop:nonfullbundle} and Lemma~\ref{lem:finite-directions}, it holds $\mu\Bigl(C\setminus \bigcup_{j=1}^N A_j\Bigr)=0$.
Make the family disjoint by setting
$$
B_1\coloneqq A_1,
\qquad
B_j\coloneqq A_j\setminus \bigcup_{m<j}A_m
\quad(j=2,\dots,N).
$$
Then the sets $B_j$ are Borel, pairwise disjoint, contained in $C$, and $\mu\Bigl(C\setminus \bigcup_{j=1}^N B_j\Bigr)=0$.
By inner regularity choose compact sets $C_j\subset B_j$ such that
$$
\mu\Bigl(C\setminus \bigcup_{j=1}^N C_j\Bigr)<\frac{\varepsilon}{3}.
$$
Since the $B_j$ are pairwise disjoint, the compact sets $C_j$ are pairwise disjoint as well.
Since the restriction of $f$ to the compact set $\bigcup_{j=1}^N C_j$ is continuous and $|f|\le M$ everywhere, by the Tietze extension theorem there exists a continuous function $\widetilde f \colon \Omega\to\R$ such that
$$
\widetilde f=f\quad\text{on }\bigcup_{j=1}^N C_j,
\qquad
\|\widetilde f\|_{C^0(\Omega)}\le M.
$$
We now fix pairwise disjoint open sets $U_j\Subset \Omega$ such that $C_j \subset U_j$, choose cutoff functions $\psi_j\in C^\infty_c(U_j)$ such that
$$
0\le \psi_j\le 1,
\qquad
\psi_j\equiv 1\quad\text{on }C_j,
$$
and set $\lambda_j\coloneqq \psi_j\widetilde f\in C_c(U_j)$. Then
$$
\lambda_j=f \quad\text{on }C_j,
\qquad
\|\lambda_j\|_{C^0(U_j)}\le M.
$$

Fix $j$.
By Lemma~\ref{lem:Cnull}, applied to $\mu$ on the set $C_j$, there exists a $C(v_j,\alpha)$-null set $F_j \subset C_j$ such that $\mu(C_j \setminus F_j)=0$. Considering the restricted measure $\nu_j\coloneqq \mu\llcorner C_j$, it holds $\nu_j(U_j\setminus F_j)=0$.

Now we apply Proposition~\ref{prop:local} with $\alpha,\Tilde{\delta}$ as fixed in \eqref{eq:alpha_delta_div} and
$$
U=U_j,\qquad \nu=\nu_j,\qquad e=v_j,
\qquad \lambda=\lambda_j,
\qquad
\eta_j\coloneqq \frac{\varepsilon}{3N},
$$
obtaining a compact set $K_j\subset U_j$ and a function $u_j\in C^1_c(U_j)$ such that
\begin{align*}
\nu_j(U_j\setminus K_j)&<\frac{\varepsilon}{3N},\\
\partial_{v_j}u_j(x)&=\lambda_j(x)\qquad \forall x\in K_j,\\
\|Du_j\|_{C^0(U_j)}&\le (1+\Tilde{\delta})c_\alpha M < (1+\delta)M,\\
\|u_j\|_{C^0(U_j)}&\le \varepsilon.
\end{align*}
Since $\nu_j$ is supported on $C_j$, replacing $K_j$ by $K_j\cap C_j$ if necessary, we may assume $K_j\subset C_j$.

To conclude, define
$$
V\coloneqq \sum_{j=1}^N u_j v_j.
$$
Since the supports of the $u_j$ are contained in the pairwise disjoint open sets $U_j$, the field $V$ belongs to $C^1_c(\Omega;\R^d)$.
Moreover
$$
DV(x)=Du_j(x)\otimes v_j
\qquad\text{if }x\in U_j,
$$
and $DV(x)=0$ outside $\bigcup_j U_j$.
Hence
$$
\|DV\|_{C^0(\Omega)}
=\max_j \|Du_j\|_{C^0(U_j)}
\leq (1+\delta) M.
$$
Therefore
$$
\Lip(V)\le (1+\delta) M.
$$
Similarly,
$$
\|V\|_{C^0(\Omega)}
=\max_j \|u_j\|_{C^0(U_j)}
\le \varepsilon.
$$

Set
$$
K\coloneqq \bigcup_{j=1}^N K_j.
$$
Since the $K_j$ are finitely many compact sets, $K$ is compact. Let us now fix $x \in K$; then there exists a unique $j \in \{1,\dots,N\}$ such that $x \in K_j$ and all other terms $u_i v_i$ in the definition of $V$, for $j \neq i$, vanish in a neighborhood of $x$.
Thus in a neighborhood of $x$, we have $V=u_j v_j$.
Writing $v_j=(\beta_1,\dots,\beta_d)$, it holds
$$
\diver V(x)=\sum_{k=1}^d \beta_k\partial_{e_k}u_j(x)= \partial_{v_j} u_j=\lambda_j(x)=f(x),
$$
% By Lemma~\ref{lem:div},
% $$
% \diver V(x)=\partial_{v_j}u_j(x)=\lambda_j(x)=f(x),
% $$
where the last equality holds
because $x\in K_j\subset C_j$ and $\lambda_j=f$ on $C_j$.
Therefore
$$
\diver V=f
\qquad\text{on }K.
$$

Finally,
\begin{align*}
\mu(\Omega\setminus K)
&\le \mu(\Omega\setminus C)
+\mu\Bigl(C\setminus \bigcup_{j=1}^N C_j\Bigr)
+\sum_{j=1}^N \mu(C_j\setminus K_j)\\
&< \frac{\varepsilon}{3}+\frac{\varepsilon}{3}+\sum_{j=1}^N \frac{\varepsilon}{3N}
=\varepsilon,
\end{align*}
completing the proof.
\end{proof}

\begin{rem}[Background fields]\label{rem:background-div}
If $W\in C^1(\Omega;\R^d)$ is any given vector field, applying Theorem~\ref{thm:div} to the scalar datum $f-\diver W$ yields a field $Z\in C^1_c(\Omega;\R^d)$ such that $\diver Z=f-\diver W$
outside an arbitrarily small exceptional set, with $\|Z\|_{C^{0}(\Omega)}$ arbitrarily small.
Then $V\coloneqq W+Z$ satisfies $\diver V=f$ outside an arbitrarily small exceptional set and is arbitrarily close to $W$ in the supremum norm.
\end{rem}

\section{Prescribed Jacobian}

We now prove the prescribed Jacobian theorem.

\begin{proof}[Proof of Theorem \ref{thm:jac}]
Up to modifying $g$ on a set of arbitrarily small measure, we can assume that $g \in L^{\infty}(\Omega,\mu)$ and let $L \coloneqq \|g-1\|_{L^{\infty}(\Omega,\mu)}$.

We choose again $\alpha,\Tilde{\delta}$ as in \eqref{eq:alpha_delta_div} and consider the finite family $v_1,\dots,v_N$ of directions given by Lemma~\ref{lem:finite-directions}.

As in the proof of Theorem~\ref{thm:div}, by Lusin's theorem there exists a compact set $C\subset \Omega$ such that $\mu(\Omega\setminus C)<\frac{\varepsilon}{3}$ and $g|_C$ is continuous. Similarly, we define the disjoint compact sets $C_j$ on which $V(\mu,\cdot) \cap C(v_j,\alpha)=\{0\}$ and the open, pairwise disjoint sets $U_j \Subset \Omega$ with $C_j \subset U_j$.

Applying the same arguments of the proof of Theorem \ref{thm:div} with the function $g-1$ in place of $f$, we obtain compact sets $K_j \subseteq C_j$ and functions $u_j \in C_c^1(U_j)$ such that
\begin{align*}
\mu(C_j\setminus K_j)&<\frac{\varepsilon}{3N},\\
\partial_{v_j}u_j(x)&=g(x)-1\qquad \forall x\in K_j,\\
\|Du_j\|_{C^0(U_j)}&\le (1+\Tilde{\delta})c_\alpha L < (1+\delta)L,\\
\|u_j\|_{C^0(U_j)}&\le \varepsilon
\end{align*}
and define
$$
\Phi(x)\coloneqq x+\sum_{j=1}^N u_j(x)v_j.
$$
Since the supports of the $u_j$ are contained in pairwise disjoint open sets, $\Phi$ belongs to $C^1(\Omega;\R^d)$ and $\Phi-\Id\in C^1_c(\Omega;\R^d)$.
Moreover
$$
\|\Phi-\Id\|_{C^0(\Omega)}
=\max_j \|u_j\|_{C^0(U_j)}
\le \varepsilon.
$$
Also,
$$
D\Phi(x)=I+v_j\otimes \nabla u_j(x)
\qquad\text{if }x\in U_j,
$$
and $D\Phi(x)=I$ outside $\bigcup_j U_j$.
Hence
$$
\|D(\Phi -\Id) \|_{C^0(\Omega)}
\le \max_j \|Du_j\|_{C^0(U_j)}
\le (1+\delta)L,
$$
so $\Lip(\Phi- \Id)\le (1+\delta)L$.

Set $K\coloneqq \bigcup_{j=1}^N K_j$. Let us fix $x \in K$ and, in order to show that $\det D\Phi=g$ on $K$, we observe that $x \in K_j$ for some $j$ and that $u_i(x)v_i=0$ for $i \neq j$ in a neighborhood of $x$. In this neighborhood it thus holds
$$
\Phi = \operatorname{Id} + u_jv_j,
\qquad
D\Phi=I_d +v_j \otimes \nabla u_j.
$$
Since the Jacobian is invariant under orthonormal changes of coordinates, we can choose one such that $v_j=(1,0,\dots,0)$ and simply compute
$$
\begin{aligned}
\det D\Phi(x)
=
1+ \partial_{v_j} u_j(x)
=
1+g(x)-1= g(x).
\end{aligned}
$$

Finally,
\begin{align*}
\mu(\Omega\setminus K)
&\le \mu(\Omega\setminus C)
+\mu\Bigl(C\setminus \bigcup_{j=1}^N C_j\Bigr)
+\sum_{j=1}^N \mu(C_j\setminus K_j)\\
&< \frac{\varepsilon}{3}+\frac{\varepsilon}{3}+\sum_{j=1}^N \frac{\varepsilon}{3N}
=\varepsilon.
\end{align*}
To prove the last assertion, by $(1+\delta)\|g-1\|_{L^\infty(\Omega,\mu)}< 1$, writing $\Phi=\Id + (\Phi-\Id)$ and using the triangular inequality, we have
$$
|\Phi(x)-\Phi(y)|\geq |x-y| - |x-y|\Lip(\Phi - \Id) 
\geq
|x-y|\left( 1- (1+\delta)\|g-1\|_{L^\infty(\Omega,\mu)}\right),
$$
which proves that $\Phi$ is injective and
$$
\Lip(\Phi^{-1})
\leq
\frac{1}{ 1- (1+\delta)\|g-1\|_{L^\infty(\Omega,\mu)}}.
$$
Since the extension of $\Phi$ to the whole $\mathbb{R}^d$ is a diffeomorphism with its image, its image is open in $\mathbb{R}^d$; moreover, this image is also closed in $\mathbb{R}^d$ because $\Phi$ coincides with the identity outside (the interior of) a compact set $E$ and $\Phi(E)$ is closed because compact. The connectedness of $\mathbb{R}^d$ yields the surjectivity of $\Phi$.
\end{proof}

We now show that the above result allows us to perturb any given diffeomorphism.

\begin{proof}[Proof of Corollary \ref{cor:diff_jac}]
Since $F$ is a diffeomorphism, the set $\Sigma$ is open; moreover, since $\mu\perp\LL^d$, also $\nu\coloneqq F_\#\mu$ satisfies $\nu\perp\LL^d$.

Define $h:\Sigma\to\R$ by
$$
h(y)\coloneqq (g\circ F^{-1})(y)\,\det DF^{-1}(y).
$$
and let us consider an open set $U \Subset \Sigma$ with $\nu(\Sigma\setminus U)< \frac{\varepsilon}{2}$.
Applying Theorem~\ref{thm:jac} on $U$ with measure $\nu$ and datum $h$, we find a compact set $K'\subset U$ and a map
$$
\Psi\in C^1(U;\R^d),\qquad \Psi-\Id\in C^1_c(U;\R^d),
$$
such that
$$
\nu(U\setminus K')<\min \left\{\frac{\varepsilon}{2}, \frac{\operatorname{dist}(U,\partial \Sigma)}{2} \right\},
\qquad
\det D\Psi(y)=h(y)\quad\forall y\in K',
$$
and
$$
\Lip(\Psi-\Id)\le (1+\delta)\|h-1\|_{L^\infty(\Sigma,\nu)},
\qquad
\|\Psi-\Id\|_{C^0(U)}\le \min \left\{\frac{\varepsilon}{2}, \frac{\operatorname{dist}(U,\partial \Sigma)}{2} \right\}.
$$
The last estimate in particular implies $\Phi(\Sigma)\subseteq \Sigma$.

Set
$$
\Phi\coloneqq \Psi\circ F,
\qquad
K\coloneqq F^{-1}(K').
$$
Then $\Phi\in C^1(\Omega;\Sigma)$. Since $F$ is a diffeomorphism and $K'$ is compact, the set $K$ is compact. Moreover,
$$
\mu(\Omega\setminus K)
=
\mu\bigl(F^{-1}(\Sigma\setminus K')\bigr)
=
\nu(\Sigma\setminus K')
<
\varepsilon.
$$

If $x\in K$, then $F(x)\in K'$, and by the chain rule
$$
\det D\Phi(x)
=
\det D\Psi(F(x))\,\det DF(x)
=
h(F(x))\,\det DF(x).
$$
By the definition of $h$,
$$
h(F(x))
=
g(x)\,\det DF^{-1}(F(x)),
$$
hence
$$
\det D\Phi(x)
=
g(x)\,\det DF^{-1}(F(x))\,\det DF(x)
=
g(x)
\qquad\forall x\in K.
$$

Finally,
$$
\Phi-F
=
(\Psi-\Id)\circ F,
$$
hence
$$
\supp(\Phi-F)\subset F^{-1}\bigl(\supp(\Psi-\Id)\bigr).
$$
Since $\supp(\Psi-\Id)\Subset\Sigma$ and $F^{-1}$ is continuous, it follows that $\Phi-F\in C^1_c(\Omega;\R^d)$. Moreover,
$$
\|\Phi-F\|_{C^0(\Omega)}
=
\|(\Psi-\Id)\circ F\|_{C^0(\Omega)}
=
\|\Psi-\Id\|_{C^0(\Sigma)}
\le \varepsilon.
$$
Also,
$$
\Lip(\Phi-F)
\le
\Lip(\Psi-\Id)\,\Lip(F)
\le
(1+\delta)\Lip(F)\|h-1\|_{L^\infty(\Sigma,\nu)}.
$$
Since $\nu=F_\#\mu$, we have
$$
\|h-1\|_{L^\infty(\Sigma,\nu)}
=
\|h\circ F-1\|_{L^\infty(\Omega,\mu)}.
$$
By the definition of $h$,
$$
h(F(x))
=
g(x)\,\det DF^{-1}(F(x))
=
\frac{g(x)}{\det DF(x)},
$$
therefore
$$
\|h-1\|_{L^\infty(\Sigma,\nu)}
=
\left\|\frac{g}{\det DF}-1\right\|_{L^\infty(\Omega,\mu)}.
$$
Thus
$$
\Lip(\Phi-F)\le (1+\delta)\Lip(F)
\left\|\frac{g}{\det DF}-1\right\|_{L^\infty(\Omega,\mu)},
$$
which is the desired estimate.
\end{proof}

\end{document}